\newtheorem{theorem}{Theorem}
\newtheorem{corollary}[theorem]{Corollary}
\newtheorem{example}[theorem]{Example}
\newtheorem{remark}[theorem]{Remark}
\title{$p$-Biharmonic hypersurfaces in Einstein space and conformally flat space}
\author{Khadidja Mouffoki \\
Mascara University, Faculty of Exact Sciences, Mascara 29000, Algeria\\
\texttt{khadidja.mouffoki@univ-mascara.dz}
\And Ahmed Mohammed Cherif\\
Mascara University, Faculty of Exact Sciences, Mascara 29000, Algeria\\
\texttt{a.mohammedcherif@univ-mascara.dz}}
\begin{document}

\maketitle

\begin{abstract}
In this paper, we present some new properties for $p$-biharmonic hypersurfaces in Riemannian manifold. We also
characterize the $p$-biharmonic submanifolds in an Einstein space. We construct a new example of proper $p$-biharmonic hypersurfaces. We
present some open problems.
\\[2mm] {\it AMS Mathematics  Subject Classification $(2020)$}: 53C43, 58E20, 53C25.
\\[1mm] {\it Key words and phrases:} $p$-biharmonic maps, $p$-biharmonic submanifolds, Einstein space.
\end{abstract}

\maketitle

%
%
\section{Introduction}
Let $\varphi:(M^m,g)\longrightarrow (N^n,h)$ be a smooth map  between Riemannian manifolds.
The $p$-energy functional of $\varphi$ is defined by
\begin{equation}\label{eq1.1}
E_{p}(\varphi;D)=\frac{1}{p}\int_{D}|d\varphi|^pv_{g},
\end{equation}
where $D$ is a compact domain in $M$, $|d\varphi|$  the Hilbert-Schmidt norm of the differential $d\varphi$,  $v^g$  the volume element on $(M^m,g)$, and $p\geq2$.\\
A smooth map is called $p$-harmonic if it is a critical point of the $p$-energy functional (\ref{eq1.1}). We have
\begin{equation}\label{eq1.2}
    \frac{d}{dt}E_{p}(\varphi_{t};D)\Big|_{t=0}=-\int_{D}h(\tau_{p}(\varphi),v)v_{g},
\end{equation}
where  $\{\varphi_{t}\}_{t\in (-\epsilon,\epsilon)}$ is a smooth variation of $\varphi$ supported in $D$,
$\displaystyle v=\frac{\partial \varphi_{t}}{\partial t}\Big|_{t=0}$ the variation vector field of $\varphi$,
and $\tau_{p}(\varphi)=\operatorname{div}^M(|d\varphi|^{p-2}d\varphi)$  the $p$-tension field of $\varphi$.\\
Let $\nabla^{M}$  the Levi-Civita connection of $(M^m,g)$, and $\nabla^{\varphi}$ the pull-back connection on $\varphi^{-1}TN$, the map $\varphi$ is $p$-harmonic if and only if (see \cite{BG,BI,ali})
\begin{equation}\label{eq1.3}
   |d\varphi|^{p-2}\tau(\varphi)+(p-2)|d\varphi|^{p-3} d\varphi(\operatorname{grad}^M|d\varphi|)=0,
\end{equation}
where $\tau(\varphi)=\operatorname{trace}_g\nabla d\varphi$ is the tension field of $\varphi$ (see \cite{BW,ES}). The $p$-bienergy functional of $\varphi$ is defined by
\begin{equation}\label{eq1.4}
    E_{2,p}(\varphi;D)=\frac{1}{2}\int_D|\tau_p(\varphi)|^2 v^g.
\end{equation}
We say that $\varphi$ is a $p$-biharmonic map if it is a critical point of the $p$-bienergy functional \eqref{eq1.4}, the Euler-Lagrange equation of the $p$-bienergy functional is given by (see \cite{cherif2})
\begin{eqnarray}\label{eq1.7}
\tau_{2,p}(\varphi)
   &=&\nonumber -|d\varphi|^{p-2}\operatorname{trace}_gR^{N}(\tau_{p}(\varphi),d\varphi)d\varphi
       -\operatorname{trace}_g\nabla^\varphi |d\varphi|^{p-2} \nabla^\varphi \tau_{p}(\varphi)\\
    &&-(p-2)\operatorname{trace}_g\nabla <\nabla^\varphi\tau_{p}(\varphi),d\varphi>|d\varphi|^{p-4}d\varphi=0,
\end{eqnarray}
where $R^N$ is the curvature tensor of $(N^n,h)$ defined by
\begin{equation*}
    R^N(X,Y)Z=\nabla^N_X \nabla^N_Y Z-\nabla^N_Y \nabla^N_X Z-\nabla^N_{[X,Y]}Z,\quad\forall X,Y,Z\in\Gamma(TN),
\end{equation*}
and $\nabla^N$ the Levi-Civita connection of $(N^n,h)$. The $p$-energy functional (resp. $p$-bienergy functional) includes as a special case $(p = 2)$ the energy functional (resp. bienergy functional), whose critical points are the usual harmonic maps (resp. biharmonic maps \cite{Jiang}).\\
A submanifold in a Riemannian manifold is called a $p$-harmonic submanifold (resp. $p$-biharmonic submanifold) if the isometric immersion defining the submanifold is a $p$-harmonic map (resp. $p$-biharmonic map). Will call proper $p$-biharmonic submanifolds a $p$-biharmonic submanifols which is non $p$-harmonic.
%
\section{Main Results}
Let $(M^m,g)$ be a hypersurface of $(N^{m+1},\langle ,\rangle )$, and  $\mathbf{i} : (M^m,g) \hookrightarrow (N^{m+1},\langle ,\rangle ) $ the canonical inclusion.
We denote by $\nabla^M$ (resp. $\nabla^N$) the  Levi-Civita connection of $(M^m,g)$ (resp. of $(N^{m+1},\langle  ,\rangle  )$),  $\operatorname{grad}^M$ (resp. $\operatorname{grad}^N$) the gradient operator in $(M^m,g)$ (resp. in $(N^{m+1},\langle ,\rangle  )$, $B$ the second fundamental form of the hypersurface  $(M^m,g)$, $A$ the shape operator with respect to the unit normal vector field $\eta$,  $H$ the mean curvature of $(M^m,g)$,  $\nabla^\perp$ the normal connection of $(M^m,g)$,  and by $\Delta$ (resp. $\Delta^{\perp}$) the Laplacian on $(M^m,g)$ (resp. on the normal bundle of $(M^m,g)$ in $(N^{m+1},\langle ,\rangle )$ (see \cite{BW,ON,YX}).
Under the notation above we have the following results.

\begin{theorem}\label{th1}
The hypersurface $(M^m,g)$ with the mean curvature vector $H= f \eta $ is $p$-bihamronic if and only if
   \begin{equation}\label{sys1}
\left\{
\begin{array}{lll}
-\Delta^M(f)  + f |A|^2 -f \operatorname{Ric}^N(\eta , \eta ) + m(p-2)   f^3 &=& 0; \\\\
 2A(\operatorname{grad}^M f) -2 f (\operatorname{Ricci}^N \eta)^\top + ( p-2 + \dfrac{m}{2} ) \operatorname{grad}^M f^2 &=& 0,
\end{array}
\right.
\end{equation}
where $\operatorname{Ric}^N $ (resp. $\operatorname{Ricci}^N$) is the Ricci curvature (resp. Ricci tensor) of $(N^{m+1},\langle ,\rangle )$.
\end{theorem}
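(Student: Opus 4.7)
The plan is to substitute the isometric inclusion $\mathbf{i}$ into the $p$-biharmonic equation \eqref{eq1.7} and then decompose the resulting section of $\mathbf{i}^{-1}TN$ into its normal and tangential parts along $\eta$. Two observations drive the simplification. First, since $\mathbf{i}$ is an isometric immersion, $|d\mathbf{i}|^2=m$ is constant on $M$, so the gradient in \eqref{eq1.3} vanishes and $\tau_p(\mathbf{i})=m^{(p-2)/2}\tau(\mathbf{i})=m^{p/2}f\eta$; in particular $\tau_p(\mathbf{i})$ is a purely normal vector field along $\mathbf{i}$. Second, every occurrence of $|d\mathbf{i}|$ in \eqref{eq1.7} is a constant power of $m$, so every covariant derivative of those factors drops out.

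With these in hand, I would expand each of the three terms of \eqref{eq1.7}. For the curvature term $-|d\mathbf{i}|^{p-2}\operatorname{trace}_g R^N(\tau_p,d\mathbf{i})d\mathbf{i}$, the sum $\sum_i R^N(\eta,e_i)e_i$ decomposes into a piece along $\eta$ producing $\operatorname{Ric}^N(\eta,\eta)$ and a tangential piece producing $(\operatorname{Ricci}^N\eta)^\top$. For the rough Laplacian term I would iterate the Weingarten formula $\nabla^\varphi_X(f\eta)=X(f)\eta-fA(X)$ and trace against an orthonormal frame; this produces $-\Delta^M f$ and $f|A|^2$ in the normal part, together with $2A(\operatorname{grad}^M f)$ and the divergence $\operatorname{div}^M A:=\sum_i(\nabla_{e_i}A)e_i$ in the tangential part. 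Applying the Codazzi equation of $M\hookrightarrow N$ rewrites $\operatorname{div}^M A$ as $m\operatorname{grad}^M f-(\operatorname{Ricci}^N\eta)^\top$, producing the $\tfrac{m}{2}\operatorname{grad}^M f^2$ in the tangential equation as well as an extra Ricci correction.

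The third, characteristically $p$-dependent term is the crux. Reading $\langle\nabla^\varphi\tau_p,d\mathbf{i}\rangle$ as the scalar $\sum_i\langle\nabla^\varphi_{e_i}\tau_p,d\mathbf{i}(e_i)\rangle$ and using $\nabla^\varphi_{e_i}\tau_p=m^{p/2}(e_i(f)\eta-fA(e_i))$ together with $\eta\perp e_i$, it collapses, via $\operatorname{trace}A=mf$, to $-m^{p/2}f\operatorname{trace}A=-m^{p/2+1}f^2$. The outer trace-and-derivative $\operatorname{trace}_g\nabla^\varphi(f^2 d\mathbf{i})=\operatorname{grad}^M f^2+mf^3\eta$ then delivers the contributions $m(p-2)f^3$ in the normal direction and $(p-2)\operatorname{grad}^M f^2$ in the tangential direction that are characteristic of \eqref{sys1}.

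Collecting normal and tangential parts separately and dividing by the common factor $m^{p-1}$ yields the two equations of \eqref{sys1}: the tangential coefficient $p-2+\tfrac{m}{2}$ arises from combining the Codazzi contribution with the third term, and the two Ricci contributions combine into $-2f(\operatorname{Ricci}^N\eta)^\top$. All steps are reversible. The main technical obstacle is the third term of \eqref{eq1.7}: one must identify its scalar collapse via $\operatorname{trace}A=mf$, and one must apply the Codazzi identity inside the rough Laplacian carefully so that the separate Ricci and $\operatorname{grad}^M f^2$ contributions combine into the clean coefficients $m(p-2)f^3$ and $p-2+\tfrac{m}{2}$ appearing in \eqref{sys1}.
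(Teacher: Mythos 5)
Your proposal is correct and follows essentially the same route as the paper: compute $\tau_p(\mathbf{i})=m^{p/2}f\eta$, expand the three terms of \eqref{eq1.7} in a normal frame, use the Weingarten formula and the traced Codazzi identity $\operatorname{div}^M A=m\operatorname{grad}^M f-(\operatorname{Ricci}^N\eta)^\top$ (which the paper rederives directly from the curvature tensor in \eqref{eq2.7}--\eqref{eq2.8}), collapse the scalar $\langle\nabla^{\mathbf{i}}\tau_p(\mathbf{i}),d\mathbf{i}\rangle$ to $-m^{p/2+1}f^2$, and split into normal and tangential components. The intermediate identities and final coefficients you report all agree with the paper's \eqref{eq2.3}, \eqref{eq2.4} and \eqref{eq2.10}.
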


\begin{proof}
Choose a normal orthonormal frame $\{ e_i \}_{i=1,... , m }$ on $(M^m,g)$ at $x$, so that $\{ e_i , \eta \}_{i=1,...,m} $ is an orthonormal frame on the ambient space $(N^{m+1},\langle ,\rangle )$.
Note that,  $d\mathbf{i}(X)=X$, $\nabla^{\mathbf{i}}_X Y = \nabla^N_X Y $, and the $p$-tension field of $\mathbf{i}$ is given by $ \tau_p(\mathbf{i}) = m^{\frac{p}{2} } f \eta $.
We compute the $p$-bitension field of $\mathbf{i}$
\begin{eqnarray}\label{eq2.2}
\nonumber  \tau_{2,p}(\mathbf{i})& =&  -|d\mathbf{i}|^{p-2}  \operatorname{trace}_g R^N(\tau_p(\mathbf{i}) , d\mathbf{i} ) d\mathbf{i}  \\
\nonumber             & & -(p-2) \operatorname{trace}_g \nabla \langle  \nabla^\mathbf{i} \tau_p(\mathbf{i}) , d\mathbf{i} \rangle  |d\mathbf{i}|^{p-4} d\mathbf{i}  \\
                      & &- \operatorname{trace}_g \nabla^\mathbf{i}|d\mathbf{i}|^{p-2} \nabla^\mathbf{i}\tau_p(\mathbf{i}).
\end{eqnarray}
The first term of (\ref{eq2.2}) is given by
\begin{eqnarray}\label{eq2.3}
\nonumber -|d\mathbf{i}|^{p-2} \operatorname{trace}_g R^N (\tau_p(\mathbf{i}) , d\mathbf{i} )d\mathbf{i}
                                                      &=& -|d\mathbf{i}|^{p-2} \sum_{i=1}^mR^N(\tau_p(\mathbf{i}) , d\mathbf{i}(e_i))d\mathbf{i}(e_i)\\
\nonumber                                             &=& - m^{p-1} f \sum_{i=1}^m R^N(\eta , e_i)e_i \\
\nonumber                                             &=& -m^{p-1} f \operatorname{Ricci}^N \eta \\
\nonumber                                            &=& -m^{p-1} f \left[ (\operatorname{Ricci}^N \eta )^\perp +(\operatorname{Ricci}^N \eta)^\top \right].\\
\end{eqnarray}
We compute the second term of (\ref{eq2.2})
\begin{eqnarray*}
  -(p-2)\operatorname{trace}_g \nabla\langle  \nabla^\mathbf{i} \tau_p(\mathbf{i}) , d\mathbf{i} \rangle  |d\mathbf{i}|^{p-4}d\mathbf{i}
&=& -(p-2)m^{p-2} \sum_{i,j=1}^m\nabla^N_{e_j} \langle  \nabla^N_{e_i}  f \eta , e_i \rangle  e_j,
\end{eqnarray*}
\begin{eqnarray*}
\sum_{i=1}^m\langle  \nabla^N_{e_i}  f \eta , e_i \rangle
                                           &=& \sum_{i=1}^m\left[\langle e_i(f) \eta , e_i \rangle  + f\langle \nabla^N_{e_i} \eta , e_i \rangle  \right]\\
                                  &=& -f \sum_{i=1}^m \langle \eta , B(e_i , e_i) \rangle  \\
                                           &=& -m f^2.
\end{eqnarray*}
By the last two equations, we have the following
\begin{equation}\label{eq2.4}
-(p-2)\operatorname{trace}_g \nabla\langle  \nabla^\mathbf{i} \tau_p(\mathbf{i}) , d\mathbf{i} \rangle  |d\mathbf{i}|^{p-4}d\mathbf{i}
= m^{p-1}(p-2) \left( \operatorname{grad}^M f^2 + m f^3\eta\right).
\end{equation}
The third term of (\ref{eq2.2}) is given by
\begin{eqnarray}\label{eq2.5}
\nonumber - \operatorname{trace}_g \nabla^{\mathbf{i}}|d\mathbf{i}|^{p-2} \nabla^{\mathbf{i}}\tau_p(\mathbf{i})
                                                         &=& -m^{p-1} \sum_{i=1}^m\nabla^N_{e_i} \nabla^N_{e_i} f \eta \\
\nonumber                                                &=& -m^{p-1} \sum_{i=1}^m\nabla^N_{e_i}[e_i(f) \eta +f \nabla^N_{e_i} \eta ] \\
\nonumber                                                &=& -m^{p-1}\left[ \Delta^M (f) \eta + 2 \nabla^N_{\operatorname{grad}^M f } \eta + f \sum_{i=1}^m\nabla^N_{e_i}\nabla^N_{e_i} \eta \right].
\nonumber  \\
\end{eqnarray}
Thus, at $x$, we obtain
 \begin{eqnarray} \label{eq2.6}
 \sum_{i=1}^m\nabla_{e_i}^N \nabla_{e_i}^N \eta
                                              &=& \nonumber \sum_{i=1}^m\nabla_{e_i}^N\left[(\nabla_{e_i}^N \eta)^\perp +(\nabla_{e_i}^N \eta)^\top \right] \\
                                              &=&\nonumber  -\sum_{i=1}^m\nabla_{e_i}^NA(e_i) \\
                                              &=& - \sum_{i=1}^m\nabla_{e_i}^M A(e_i)-\sum_{i=1}^mB(e_i , A(e_i)).
 \end{eqnarray}
 Since $\langle  A(X),Y\rangle  = \langle B(X,Y) , \eta\rangle  $ for all $X,Y \in \Gamma(TM) $, we get
 \begin{eqnarray}\label{eq2.7}
 \nonumber \sum_{i=1}^m\nabla_{e_i}^M A(e_i)
                                 &=&\sum_{i,j=1}^m \langle \nabla_{e_i}^M A(e_i), e_j\rangle  e_j \\
 \nonumber                       &=&\sum_{i,j=1}^m\left[ e_i \langle A(e_i) , e_j \rangle  e_j - \langle A(e_i) , \nabla^M_{e_i} e_j \rangle e_j \right]\\
 \nonumber                       &=&\sum_{i,j=1}^m e_i \langle B(e_i,e_j) , \eta \rangle  e_j \\
 \nonumber                       &=&\sum_{i,j=1}^m e_i \langle \nabla_{e_j}^Ne_i , \eta \rangle  e_j\\
                       &=&\sum_{i,j=1}^m \langle \nabla_{e_i}^N\nabla_{e_j}^Ne_i , \eta \rangle e_j.
 \end{eqnarray}
By using the definition of curvature tensor of $(N^{m+1},\langle ,\rangle)$, we conclude
\begin{eqnarray} \label{eq2.8}
\nonumber \sum_{i=1}^m\nabla_{e_i}^M A(e_i)
&=&\sum_{i,j=1}^m \left[\langle R^N (e_i,e_j)e_i , \eta\rangle e_j + \langle \nabla_{e_j}^N\nabla_{e_i}^Ne_i , \eta\rangle e_j\right] \\
\nonumber
&=&\sum_{i,j=1}^m \left[ -\langle R^N (\eta , e_i) e_i ,e_j \rangle e_j +  \langle \nabla_{e_j}^N\nabla_{e_i}^Ne_i , \eta\rangle e_j \right]\\
\nonumber
&=&  - \sum_{j=1}^m\langle \operatorname{Ricci}^N \eta , e_j \rangle e_j + \sum_{i,j=1}^m e_j\langle \nabla_{e_i}^Ne_i , \eta\rangle e_j-\sum_{i,j=1}^m \langle \nabla_{e_i}^N{e_i}, \nabla_{e_i}^N \eta \rangle e_j \\
&=& -( \operatorname{Ricci}^N \eta )^\top + m \operatorname{grad}^M f.
\end{eqnarray}
On the other hand, we have
 \begin{eqnarray} \label{eq2.9}
 \nonumber \sum_{i=1}^mB(e_i,A(e_i )) &=& \sum_{i=1}^m\langle B(e_i , A(e_i)) , \eta\rangle   \eta \\
 \nonumber                &=& \sum_{i=1}^m\langle  A(e_i) , A(e_i )\rangle  \eta \\
                          &=& |A|^2 \eta.
 \end{eqnarray}
Substituting (\ref{eq2.6}), (\ref{eq2.8}) and (\ref{eq2.9}) in (\ref{eq2.5}), we obtain
 \begin{eqnarray}\label{eq2.10}
  - \operatorname{trace}_g \nabla^{\mathbf{i}}|d\mathbf{i}|^{p-2} \nabla^{\mathbf{i}}\tau_p(\mathbf{i})
  &=&\nonumber - m^{p-1} \big[ \Delta^M(f)\eta-2 A (\operatorname{grad}^M f) + f (\operatorname{Ricci}^N \eta )^\top \\
  &&- \dfrac{m}{2} \operatorname{grad}^M f^2 - f |A|^2 \eta  \big].
 \end{eqnarray}
The Theorem \ref{th1} follows by (\ref{eq2.2})-(\ref{eq2.4}), and (\ref{eq2.10}).
\end{proof}

As an immediate consequence of Theorem \ref{th1} we have.
\begin{corollary}\label{corollary1}
A hypersurface $(M^m,g) $ in an Einstein space $(N^{m+1},\langle ,\rangle  )$ is $p$-biharmonic if and only if it's mean curvature function $f$ is a solution of the following PDEs
\begin{equation}\label{sys2}
\left\{
\begin{array}{lll}
-\Delta^M(f)  + f |A|^2  + m(p-2) f^3 - \frac{S}{m+1}f &=& 0; \\\\
 2A(\operatorname{grad}^M f) + ( p-2 + \dfrac{m}{2} ) \operatorname{grad}^M f^2 &=& 0,
\end{array}
\right.
\end{equation}
where $S$ is the scalar curvature of the ambient space.

\end{corollary}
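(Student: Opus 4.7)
The plan is to specialize the two equations of Theorem \ref{th1} to the case in which $(N^{m+1},\langle,\rangle)$ is Einstein, where by definition $\operatorname{Ricci}^N = \lambda \langle\cdot,\cdot\rangle$ for some constant $\lambda$. Taking the trace of this identity on the ambient space yields $S=(m+1)\lambda$, so that $\lambda = S/(m+1)$. This is the only ambient-geometric input needed.

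Next, I would evaluate the two quantities involving $\operatorname{Ricci}^N$ that appear in the system \eqref{sys1}. Since $\eta$ is a unit vector, the Einstein condition gives
\begin{equation*}
\operatorname{Ric}^N(\eta,\eta)=\lambda\langle\eta,\eta\rangle=\frac{S}{m+1},
\end{equation*}
and similarly $\operatorname{Ricci}^N\eta=\lambda\eta$, a vector purely in the normal direction. In particular, its tangential component vanishes:
\begin{equation*}
(\operatorname{Ricci}^N\eta)^\top=0.
\end{equation*}

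With these two ingredients in hand, the substitution into \eqref{sys1} is direct: the term $-f\operatorname{Ric}^N(\eta,\eta)$ in the first equation becomes $-\tfrac{S}{m+1}f$, producing the first equation of \eqref{sys2}, while the term $-2f(\operatorname{Ricci}^N\eta)^\top$ in the second equation drops out, leaving the second equation of \eqref{sys2}. There is no serious obstacle here: the content of the corollary is entirely contained in identifying the Einstein constant with $S/(m+1)$ and observing the vanishing of the tangential part of $\operatorname{Ricci}^N\eta$.
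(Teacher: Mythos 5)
Your proposal is correct and follows essentially the same route as the paper: identify the Einstein constant $\lambda$ with $S/(m+1)$ by tracing, observe that $\operatorname{Ric}^N(\eta,\eta)=\lambda$ and that $(\operatorname{Ricci}^N\eta)^\top=0$ since $\operatorname{Ricci}^N\eta=\lambda\eta$ is purely normal, then substitute into the system of Theorem \ref{th1}. No gaps.
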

 \begin{proof}
  It is well known that if $(N^{m+1} ,\langle  , \rangle  ) $ is an Einstein manifold then $\operatorname{Ric}^N(X,Y) = \lambda \langle X,Y\rangle $
  for some constant $\lambda$, for any $X,Y \in \Gamma(TN)$. So that
  \begin{eqnarray*}
  S&=& \operatorname{trace}_{\langle  , \rangle} \operatorname{Ric}^N\\
   &=& \sum_{i=1}^m\operatorname{Ric}^N(e_i,e_i) + \operatorname{Ric}^N(\eta, \eta) \\
   &=& \lambda (m+1),
  \end{eqnarray*}
where $\{ e_i \}_{i=1,... , m }$ is a normal orthonormal frame on $(M^m,g)$ at $x$.
Since $\operatorname{Ric}^N(\eta, \eta) = \lambda  $, on conclude that
 $$\operatorname{Ric}^N(\eta, \eta) = \frac{S}{m+1}. $$
On the other hand, we have
 \begin{eqnarray*}
 (\operatorname{Ricci}^N\eta)^{\top}&=& \sum_{i=1}^m\langle \operatorname{Ricci}^N\eta , e_i\rangle e_i \\
                                    &=& \sum_{i=1}^m\operatorname{Ric}^N(\eta , e_i)e_i\\
                                    &=& \sum_{i=1}^m\lambda\langle \eta , e_i\rangle  e_i \\
                                    &=& 0.
 \end{eqnarray*}
 The Corollary \ref{corollary1} follows by Theorem \ref{th1}.
 \end{proof}
 \begin{theorem} \label{th2}
A totally umbilical hypersurface $(M^m,g) $ in an Einstein space $(N^{m+1},\langle ,\rangle  )$ with non-positive scalar curvature is $p$-biharmonic if and only if it is minimal.
  \end{theorem}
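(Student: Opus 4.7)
The plan is to apply Corollary \ref{corollary1} and exploit the totally umbilical hypothesis $A = f\, \mathrm{Id}_{TM}$ to collapse the system \eqref{sys2} into something essentially algebraic in $f$.

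First I would record the two immediate consequences of total umbilicity: namely $|A|^{2} = m f^{2}$, and
\[
A(\operatorname{grad}^{M} f) \;=\; f \operatorname{grad}^{M} f \;=\; \tfrac{1}{2}\operatorname{grad}^{M} f^{2}.
\]
Substituting the second into the tangential equation of \eqref{sys2} gives
\[
\Bigl(p - 1 + \tfrac{m}{2}\Bigr)\operatorname{grad}^{M} f^{2} \;=\; 0.
\]
Since $p \geq 2$ and $m \geq 1$ the coefficient is strictly positive, so $f^{2}$ is locally constant on $M$; in particular $\Delta^{M}(f) = 0$ and $f$ itself is constant on each connected component.

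Next I would feed $|A|^{2} = m f^{2}$ and $\Delta^{M}(f)=0$ into the normal equation of \eqref{sys2}, obtaining the purely algebraic identity
\[
f\Bigl[\,m(p-1)f^{2} \;-\; \tfrac{S}{m+1}\,\Bigr] \;=\; 0.
\]
This forces either $f \equiv 0$ (i.e.\ $M$ is minimal) or $m(p-1)f^{2} = S/(m+1)$. Under the hypothesis $S \leq 0$ the second alternative is impossible: if $S<0$ the right-hand side is negative while the left is non-negative, and if $S=0$ the equation forces $f=0$ anyway because $p-1>0$. Thus $f \equiv 0$ on $M$.

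The converse is immediate: if $M$ is minimal, then $f \equiv 0$ and both equations of \eqref{sys2} are trivially satisfied. I do not expect any real obstacle here; the only subtle point is invoking connectedness to pass from $f^{2}$ locally constant to $f$ globally constant, but this is standard for hypersurfaces and can be stated as a tacit assumption on $M$.
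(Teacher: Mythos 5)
Your proposal is correct and matches the paper's own argument: both substitute $|A|^2=mf^2$ and $A(\operatorname{grad}^M f)=\tfrac12\operatorname{grad}^M f^2$ into the system of Corollary \ref{corollary1}, deduce from the tangential equation that $f$ is constant, and then read off from the normal equation that $f=0$ or $f^2=S/(m(m+1)(p-1))$, the latter being excluded when $S\leq 0$. Your write-up is in fact slightly more careful than the paper's (you make the locally-constant step and the trivial converse explicit), but the approach is the same.
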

 \begin{proof}
 Take an orthonormal frame $\{ e_i, \eta  \}_{i=1,...,m}$ on the ambient space $(N^{m+1} , \langle ,\rangle  )$ such that
 $\{ e_i \}_{i=1,...,m}$ is an orthonormal frame on $(M^m,g)$. We have
 \begin{eqnarray*}
 f &=& \langle H,\eta\rangle  \\
   &=& \frac{1}{m} \sum_{i=1}^m\langle B(e_i, e_i), \eta \rangle  \\
   &=& \frac{1}{m} \sum_{i=1}^m\langle  g(e_i, e_i) \beta \eta , \eta \rangle  \\
   &=& \beta,
 \end{eqnarray*}
 where $\beta\in C^\infty(M)$.  The $p$-biharmonic hypersurface equation $(\ref{sys2})$ becomes
\begin{equation}\label{sys3}
\left\{
\begin{array}{lll}
-\Delta^M(\beta)  + m (p-1)\beta^3 - \frac{S}{m+1} \beta &=& 0; \\\\
 (p-1 + \frac{m}{2} ) \beta \operatorname{grad}^M \beta &=& 0,
\end{array}
\right.
\end{equation}
Solving the last system, we have $\beta= 0$ and hence $f=0$, or
$$\beta= \pm\sqrt{\frac{S}{m(m+1)(p-1)}},$$
it's constant and this happens only if $S\geq0$. The proof is complete.
 \end{proof}

\section{$p$-biharmonic hypersurface in conformally flat space }
Let $ \mathbf{i}  : M^m \hookrightarrow \mathbb{R}^{m+1} $ be a minimal hypersurface with the unit normal vector field $\eta$,
$ \widetilde{ \mathbf{i} } : (M^m , \widetilde{g}) \hookrightarrow (\mathbb{R}^{m+1} , \widetilde{h} = e^{2\gamma} h ) $, $x\longmapsto
\widetilde{ \mathbf{i} }(x)=\mathbf{i}(x)=x$, where $\gamma \in C^{\infty}(\mathbb{R}^{m+1} )$,  $h= \langle ,\rangle _{\mathbb{R}^{m+1}}  $, and $\widetilde{g} $ is the induced metric by $\widetilde{h}$, that is
$$ \widetilde{g}(X,Y ) = e^{2\gamma} g(X,Y)= e^{2\gamma} \langle X,Y\rangle _{\mathbb{R}^{m+1}} ,$$ where $g$ is the induced metric by $h$.
Let $\{e_i, \eta \}_{i=1,...,m} $ be an orthonormal frame adapted to the $p$-harmonic hypersurface on $(\mathbb{R}^{m+1} , h )$, thus $ \{\widetilde{e}_i, \widetilde{\eta} \}_{i=1,...,m} $ becomes an orthonormal frame on $ (\mathbb{R}^{m+1} , \widetilde{h} ) $, where $\widetilde{e}_i=e^{-\gamma}e_i$ for all $i=1,..., m $, and $ \widetilde{\eta}=e^{-\gamma} \eta $.

\begin{theorem}\label{th3}
 The hypersurface $(M^m , \widetilde{g})$ in the conformally flat space $(\mathbb{R}^{m+1} , \widetilde{h}) $ is $p$-biharmonic if and only if
\begin{equation}\label{sys4}
\left\{
\begin{array}{lll}
 \eta(\gamma) e^{-\gamma} \big[-\Delta^M(\gamma) -m  \operatorname{Hess}^{\mathbb{R}^{m+1}}_{\gamma} ( \eta , \eta ) + (1-m)|\operatorname{grad}^M \gamma |^2
\\\\- |A|^2+m (1-p) \eta(\gamma)^2\big]
   + \Delta^M(\eta(\gamma) e^{-\gamma}) + (m-2) ( \operatorname{grad}^M \gamma ) (\eta(\gamma) e^{-\gamma} )= 0; \\\\
-2 A(\operatorname{grad}^M( \eta(\gamma) e^{-\gamma})) +2 (1-m) \eta(\gamma) e^{-\gamma} A(\operatorname{grad}^M \gamma) \\\\+ (2p-m)\eta(\gamma)\operatorname{grad}^M (\eta(\gamma) e^{-\gamma}) = 0,
\end{array}
\right.
\end{equation}
where $\operatorname{Hess}^{\mathbb{R}^{m+1}}_{\gamma}$ is the Hessian of the smooth function $\gamma$ in $(\mathbb{R}^{m+1} , h) $.
\end{theorem}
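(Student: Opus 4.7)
The strategy is to apply Theorem~\ref{th1} to the rescaled inclusion $\widetilde{\mathbf{i}}:(M^m,\widetilde{g})\hookrightarrow(\mathbb{R}^{m+1},\widetilde{h})$ and then translate every $\widetilde{h}$/$\widetilde{g}$-quantity in the resulting pair of equations back into $h$/$g$-quantities and derivatives of $\gamma$, so that the two equations collapse to (\ref{sys4}). Minimality of $M$ in the flat background $(\mathbb{R}^{m+1},h)$ (equivalently $f=0$) is what forces several cross-terms to vanish and gives the system its compact form.

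First I would compute the new mean curvature. Using the standard conformal change of the Levi-Civita connection $\widetilde{\nabla}^{\mathbb{R}^{m+1}}_{X}Y=\nabla^{\mathbb{R}^{m+1}}_{X}Y+X(\gamma)Y+Y(\gamma)X-h(X,Y)\operatorname{grad}^{\mathbb{R}^{m+1}}\gamma$, projecting onto $\widetilde{\eta}$, tracing against the $\widetilde{g}$-orthonormal frame $\widetilde{e}_{i}=e^{-\gamma}e_{i}$, and invoking $f=0$, one obtains $\widetilde{H}=\widetilde{f}\,\widetilde{\eta}$ with $\widetilde{f}=-e^{-\gamma}\eta(\gamma)$. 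The same connection formula, applied to $\widetilde{\nabla}^{\mathbb{R}^{m+1}}_{X}\widetilde{\eta}$, yields the rescaled shape operator $\widetilde{A}(X)=e^{-\gamma}[A(X)-\eta(\gamma)X]$, and hence $|\widetilde{A}|^{2}=e^{-2\gamma}[|A|^{2}+m\,\eta(\gamma)^{2}]$ after again using $\operatorname{trace}A=0$.

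Next I would compute $\widetilde{\operatorname{Ric}}(\widetilde{\eta},\widetilde{\eta})$ and $(\widetilde{\operatorname{Ricci}}\widetilde{\eta})^{\top}$ from the standard conformal change formula for the Ricci tensor in dimension $m+1$, specialised to $\operatorname{Ric}(h)=0$. This produces Hessian, gradient and Laplacian of $\gamma$ taken with respect to $h$; the identities $\Delta^{\mathbb{R}^{m+1}}\gamma=\Delta^{M}\gamma+\operatorname{Hess}^{\mathbb{R}^{m+1}}_{\gamma}(\eta,\eta)$ and $|\operatorname{grad}^{\mathbb{R}^{m+1}}\gamma|^{2}=|\operatorname{grad}^{M}\gamma|^{2}+\eta(\gamma)^{2}$, both valid on the minimal $M$, convert these into the intrinsic $M$-quantities appearing in (\ref{sys4}). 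The tangential piece $(\widetilde{\operatorname{Ricci}}\widetilde{\eta})^{\top}$ forces one to expand mixed Hessian terms $\operatorname{Hess}^{\mathbb{R}^{m+1}}_{\gamma}(\eta,e_{i})$ via the symmetry of the Hessian and the Weingarten formula, and it is these expansions that produce both the $A(\operatorname{grad}^{M}\gamma)$ contribution and the $\operatorname{grad}^{M}(\eta(\gamma)e^{-\gamma})$ contribution on the tangential side of (\ref{sys4}).

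Finally, the conformal change formulas $\widetilde{\Delta}^{M}u=e^{-2\gamma}[\Delta^{M}u+(m-2)(\operatorname{grad}^{M}\gamma)(u)]$ and $\widetilde{\operatorname{grad}}^{M}u=e^{-2\gamma}\operatorname{grad}^{M}u$ let me replace $\widetilde{\Delta}^{M}\widetilde{f}$ and $\widetilde{A}(\widetilde{\operatorname{grad}}^{M}\widetilde{f})$ by expressions in $g$; substituting everything into the two equations of Theorem~\ref{th1} and clearing a common power of $e^{\gamma}$ will reproduce (\ref{sys4}) after collecting terms. The principal obstacle is coefficient bookkeeping: the constants $m$, $m-1$, $m-2$ and $p-2$ in (\ref{sys4}) only appear after the Ricci-conformal contribution is combined with the Laplacian-conformal contribution (and both are then scaled by $\widetilde{f}=-e^{-\gamma}\eta(\gamma)$), so care is needed not to miscount the coefficient on $|\operatorname{grad}^{M}\gamma|^{2}$ or on $\eta(\gamma)^{3}$. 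By comparison the tangential equation is straightforward: once $\widetilde{A}$, $(\widetilde{\operatorname{Ricci}}\widetilde{\eta})^{\top}$ and $\widetilde{\operatorname{grad}}^{M}\widetilde{f}$ are assembled, the three terms line up with (\ref{sys4}) essentially by inspection.
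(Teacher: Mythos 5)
Your plan is correct and follows essentially the same route as the paper: apply Theorem~\ref{th1} to $\widetilde{\mathbf{i}}$, compute $\widetilde{f}=-e^{-\gamma}\eta(\gamma)$ from the conformal change of connection together with minimality, then convert $\widetilde{\operatorname{Ric}}$, $|\widetilde{A}|^2$, $\widetilde{\Delta}$ and $\widetilde{\operatorname{grad}}$ back to $h$/$g$-quantities and substitute. Your only (harmless) deviation is deriving $|\widetilde{A}|^2_{\widetilde{g}}=e^{-2\gamma}\bigl[|A|^2+m\,\eta(\gamma)^2\bigr]$ from the explicit formula $\widetilde{A}(X)=e^{-\gamma}\bigl[A(X)-\eta(\gamma)X\bigr]$ rather than by the paper's direct term-by-term expansion; the identities $\Delta^{\mathbb{R}^{m+1}}\gamma=\Delta^M\gamma+\operatorname{Hess}^{\mathbb{R}^{m+1}}_{\gamma}(\eta,\eta)$ and $|\operatorname{grad}^{\mathbb{R}^{m+1}}\gamma|^2=|\operatorname{grad}^M\gamma|^2+\eta(\gamma)^2$ you invoke are exactly what the paper uses implicitly in its final simplification.
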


\begin{proof}
By using the Kozul's formula, we have
$$\left\{
  \begin{array}{ll}
    \widetilde{\nabla}_X^M Y  = \nabla_X^M Y + X(\gamma) Y + Y(\gamma)X -g(X,Y) \operatorname{grad}^M \gamma;  \\\\
    \widetilde{\nabla}_U^{\mathbb{R}^{m+1}}  V  = \nabla_U^{\mathbb{R}^{m+1}} V + U(\gamma) V + V(\gamma)U -h(U,V) \operatorname{grad}^{\mathbb{R}^{m+1}} \gamma,
  \end{array}
\right.$$
for all $X,Y \in\Gamma(TM )$, and $ U,V \in \Gamma(T\mathbb{R}^{m+1} )$.
Consequently
\begin{eqnarray}\label{eq3.2}
\nabla_X^{\widetilde{\mathbf{i}}} d\widetilde{\mathbf{i}}(Y)  &=&\nonumber\nabla_{X}^{\widetilde{\mathbf{i}}} Y \\
                                                             &=&\nonumber \widetilde{\nabla}_{d\mathbf{i} (X)}^{\mathbb{R}^{m+1}} Y\\
                                                             &=&\nonumber \widetilde{\nabla}_X^{\mathbb{R}^{m+1}} Y \\
                                                             &=& \nabla_X^{\mathbb{R}^{m+1}} Y + X(\gamma) Y+ Y(\gamma)X -h(X,Y) \operatorname{grad}^{\mathbb{R}^{m+1}} \gamma,\qquad
\end{eqnarray}
and the following
\begin{eqnarray}\label{eq3.3}
d\widetilde{\mathbf{i}} ( \widetilde{\nabla}_X^M Y )
&=&\nonumber d\mathbf{i}(\nabla_X^M Y) + X(\gamma) d\mathbf{i} (Y) + Y(\gamma) d\mathbf{i}(X) - g(X,Y) d\mathbf{i}( \operatorname{grad}^M \gamma )\\
&=&\nabla_X^M Y + X(\gamma) Y + Y(\gamma) X - g(X,Y)  \operatorname{grad}^M \gamma.
\end{eqnarray}
From equations (\ref{eq3.2}) and (\ref{eq3.3}), we get
\begin{eqnarray}\label{eq3.4}
(\nabla d\widetilde{\mathbf{i}})(X,Y)
&=&\nonumber \nabla_X^{\widetilde {\mathbf{i}} } d \widetilde {\mathbf{i}}  (Y)  -d \widetilde {\mathbf{i}}(\widetilde{\nabla}_X^M Y)\\
&=&\nonumber (\nabla d\mathbf{i})(X,Y) + g(X,Y)[\operatorname{grad}^M\gamma -\operatorname{grad}^{\mathbb{R}^{m+1}} \gamma]\\
&=& B(X,Y) -g(X,Y) \eta(\gamma)\eta.
\end{eqnarray}
So that, the mean curvature function $\widetilde{f}$ of $(M^m,\widetilde{g})$  in $ (\mathbb{R}^{m+1} , \widetilde{h} ) $ is given by $ \widetilde{f}=-\eta(\gamma)e^{-\gamma} $. Indeed, by taking traces in (\ref{eq3.4}), we obtain $$e^{2\gamma}\widetilde{H} =  H -  \eta(\gamma)\eta.$$
Since $(M^m,g)$  is minimal in $ (\mathbb{R}^{m+1} , h ) $, we find that $\widetilde{H} =   - e^{-2\gamma} \eta(\gamma)\eta$, that is
$\widetilde{H} =   - e^{-\gamma} \eta(\gamma)\widetilde{\eta}$.\\
With the new notations the equation $(\ref{sys1})$ for $p$-biharmonic hypersurface in the conformally flat space becomes

\begin{equation}\label{sys5}
\left\{
\begin{array}{lll}
-\widetilde{\Delta}(\widetilde{f})  + \widetilde{f} |\widetilde{A}|_{\widetilde{g}}^2 -\widetilde{f} \, \widetilde{\operatorname{Ric}}^{\mathbb{R}^{m+1} }(\widetilde{\eta} , \widetilde{\eta} ) + m(p-2)   \widetilde{f}^3 &=& 0; \\\\
 2\widetilde{A}(\widetilde{\operatorname{grad}}^M \widetilde{f}) -2 \widetilde{f} (\widetilde{\operatorname{Ricci}}^{\mathbb{R}^{m+1} } \widetilde{\eta})^\top + ( p-2 + \dfrac{m}{2} ) \widetilde{\operatorname{grad}}^M \widetilde{f}^2 &=& 0,
\end{array}
\right.
\end{equation}
A straightforward computation yields
\begin{eqnarray*}
\nonumber \widetilde{\operatorname{Ricci}}^{\mathbb{R}^{m+1}} \eta
&=& e^{-2\gamma} \big[\operatorname{Ricci}^{\mathbb{R}^{m+1}}\eta -\Delta^{\mathbb{R}^{m+1}}(\gamma)\eta+(1-m)\nabla_{\eta}^{\mathbb{R}^{m+1}} \operatorname{grad}^{\mathbb{R}^{m+1}} \gamma
\\
\nonumber &&+(1-m) |\operatorname{grad}^{\mathbb{R}^{m+1}} \gamma|^2 \eta
- (1-m) \eta(\gamma)\operatorname{grad}^{\mathbb{R}^{m+1}} \gamma \big];
\end{eqnarray*}
\begin{eqnarray}\label{eq5}
\nonumber \widetilde{\operatorname{Ric}}^{\mathbb{R}^{m+1} }(\widetilde{\eta} , \widetilde{\eta} )
&=& \widetilde{h}(\widetilde{\operatorname{Ricci}}^{\mathbb{R}^{m+1}} \widetilde{\eta},\widetilde{\eta}) \\
\nonumber &=&h(\widetilde{\operatorname{Ricci}}^{\mathbb{R}^{m+1}}\eta,\eta)\\
\nonumber&=& e^{-2\gamma} h\big(\operatorname{Ricci}^{\mathbb{R}^{m+1}}\eta - \Delta^{\mathbb{R}^{m+1}}(\gamma) \eta+(1-m)\nabla_{\eta}^{\mathbb{R}^{m+1}} \operatorname{grad}^{\mathbb{R}^{m+1}} \gamma\\
 \nonumber &&+ (1-m) |\operatorname{grad}^{\mathbb{R}^{m+1}}\gamma|^2 \eta - (1-m) \eta(\gamma)\operatorname{grad}^{\mathbb{R}^{m+1}}\gamma,\eta\big)  \\
\nonumber &=& e^{-2\gamma} \big[-\Delta^{\mathbb{R}^{m+1}}(\gamma) +(1-m) \operatorname{Hess}_{\gamma}^{\mathbb{R}^{m+1}}(\eta,\eta) +(1-m) |\operatorname{grad}^{\mathbb{R}^{m+1}}\gamma|^2\\
&& -(1-m)\eta(\gamma)^2\big];
\end{eqnarray}
\begin{eqnarray}\label{eq8}
 (\widetilde{\operatorname{Ricci}}^{\mathbb{R}^{m+1}} \widetilde{\eta})^\top
 \nonumber&=&\sum_{i=1}^mh(\widetilde{\operatorname{Ricci}}^{\mathbb{R}^{m+1} } \widetilde{\eta} ,e_i)e_i \\
\nonumber &=& (1-m)e^{-3\gamma}\sum_{i=1}^m\left[  h ( \nabla_{\eta}^{\mathbb{R}^{m+1}}\operatorname{grad}^{\mathbb{R}^{m+1}} \gamma , e_i)e_i -\eta(\gamma)h(\operatorname{grad}^{\mathbb{R}^{m+1}} \gamma , e_i)e_i \right]\\
\nonumber &=& (1-m) e^{-3\gamma}\Big[\sum_{i=1}^m h ( \nabla_{e_i}^{\mathbb{R}^{m+1}}\operatorname{grad}^{\mathbb{R}^{m+1}} \gamma , \eta)e_i
              -\eta(\gamma) \operatorname{grad}^M \gamma\Big] \\
\nonumber &=& (1-m) e^{-3\gamma}\Big[\sum_{i=1}^m e_i h(\operatorname{grad}^{\mathbb{R}^{m+1}} \gamma , \eta)e_i
-\sum_{i=1}^m h(\operatorname{grad}^{\mathbb{R}^{m+1}} \gamma  , \nabla_{e_i}^{\mathbb{R}^{m+1}} \eta ) e_i\\
\nonumber  &&- \eta(\gamma) \operatorname{grad}^{M} \gamma\Big]   \\
\nonumber &=& (1-m) e^{-3\gamma} \big[\operatorname{grad}^M \eta(\gamma)
  +\sum_{i=1}^m h(\operatorname{grad}^{\mathbb{R}^{m+1}} \gamma , Ae_i)e_i
 -\eta(\gamma) \operatorname{grad}^M \gamma\big]\\
   &=& (1-m) e^{-3\gamma} \big[\operatorname{grad}^M \eta(\gamma)
+ A(\operatorname{grad}^{M} \gamma) - \eta(\gamma) \operatorname{grad}^M \gamma\big];
\end{eqnarray}
\begin{eqnarray}\label{eq6}
\nonumber \widetilde{\Delta}(\widetilde{f})
&=& e^{-2\gamma}[\Delta(\widetilde{f})+(m-2)d\widetilde{f}(\operatorname{grad}^M \gamma )] \\
&=& e^{-2\gamma}[-\Delta(\eta(\gamma)e^{-\gamma})-(m-2)(\operatorname{grad}^M \gamma)(\eta(\gamma)e^{-\gamma})];
\end{eqnarray}
\begin{eqnarray}\label{eq3.8}
|\widetilde{A}|_{\widetilde{g}}^2
&=&\nonumber\sum_{i=1}^m\widetilde{g}(\widetilde{A}\widetilde{e}_i,\widetilde{A}\widetilde{e}_i)\\
&=&\nonumber\sum_{i=1}^m g(\widetilde{A}e_i,\widetilde{A}e_i)\\
&=&\nonumber\sum_{i=1}^m h(\widetilde{\nabla}_{e_i}^{\mathbb{R}^{m+1}} \widetilde{\eta},\widetilde{\nabla}_{e_i}^{\mathbb{R}^{m+1}} \widetilde{\eta}) \\
&=&\nonumber\sum_{i=1}^m h(\nabla_{e_i}^{\mathbb{R}^{m+1}}\widetilde{\eta}+e_i(\gamma) \widetilde{\eta} +\widetilde{\eta}(\gamma)e_i,\nabla_{e_i}^{\mathbb{R}^{m+1}}\widetilde{\eta}+e_i(\gamma) \widetilde{\eta} +\widetilde{\eta}(\gamma)e_i)\\
&=&\nonumber\sum_{i=1}^m \big[h(\nabla_{e_i}^{\mathbb{R}^{m+1}}\widetilde{\eta},\nabla_{e_i}^{\mathbb{R}^{m+1}}\widetilde{\eta}) +2 \widetilde{\eta}(\gamma)h(\nabla_{e_i}^{\mathbb{R}^{m+1}}\widetilde{\eta},e_i)+e_i(\gamma)^2e^{-2\gamma}\\
&&+2e_i(\gamma)h(\nabla_{e_i}^{\mathbb{R}^{m+1}}\widetilde{\eta},\widetilde{\eta})\big]+m\widetilde{\eta}(\gamma)^2.
\end{eqnarray}
The first term of (\ref{eq3.8}) is given by
\begin{eqnarray*}
\sum_{i=1}^mh(\nabla_{e_i}^{\mathbb{R}^{m+1}}e^{-\gamma}\eta,\nabla_{e_i}^{\mathbb{R}^{m+1}}e^{-\gamma}\eta)
&=&\sum_{i=1}^m h(-e^{-\gamma}e_i(\gamma) \eta+e^{-\gamma}\nabla_{e_i}^{\mathbb{R}^{m+1}}\eta , -e^{-\gamma}e_i(\gamma) \eta+e^{-\gamma}\nabla_{e_i}^{\mathbb{R}^{m+1}}\eta )\\
&=&\sum_{i=1}^m[ e^{-2\gamma}e_i(\gamma)^2+e^{-2\gamma}h(\nabla_{e_i}^{\mathbb{R}^{m+1}}\eta,\nabla_{e_i}^{\mathbb{R}^{m+1}}\eta)]\\
&=& e^{-2\gamma} |\operatorname{grad}^M \gamma|^2 + e^{-2\gamma}|A|^2.
\end{eqnarray*}
The second term of (\ref{eq3.8}) is given by
\begin{eqnarray*}
2\widetilde{\eta}(\gamma)\sum_{i=1}^m h(\nabla_{e_i}^{\mathbb{R}^{m+1}}\widetilde{\eta},e_i)
&=& -2e^{-\gamma} \eta(\gamma)\sum_{i=1}^m  h(e^{-\gamma}\eta,\nabla_{e_i}^{\mathbb{R}^{m+1}} e_i ) \\
&=& -2me^{-2\gamma} \eta(\gamma)h(\eta, H) \\
&=&0.
\end{eqnarray*}
Here $H=0$. We have also
\begin{eqnarray*}
2\sum_{i=1}^me_i(\gamma)h(\nabla_{e_i}^{\mathbb{R}^{m+1}} \widetilde{\eta},\widetilde{\eta})
& =&\sum_{i=1}^m e_i(\gamma) e_i h(\widetilde{\eta} , \widetilde{\eta}) \\
&=& \sum_{i=1}^me_i(\gamma) e_i(e^{-2\gamma}) \\
&=& -2e^{-2\gamma}\sum_{i=1}^m e_i(\gamma)^2 \\
&=& -2e^{-2\gamma} |\operatorname{grad}^M \gamma|^2.
\end{eqnarray*}
Thus
\begin{equation}\label{eq7}
|\widetilde{A}|_{\widetilde{h}}^2= e^{-2\gamma} |A|^2 + m e^{-2\gamma}\eta(\gamma)^2.
\end{equation}
We compute
\begin{eqnarray}\label{eq9}
\nonumber \widetilde{\operatorname{grad}}^M \widetilde{f}&=& e^{-2\gamma} \sum_{i=1}^me_i(\widetilde{f})e_i\\
                                                         &=& -e^{-2\gamma}\operatorname{grad}^M (\eta(\gamma)e^{-\gamma});
\end{eqnarray}
and the following
\begin{eqnarray}\label{eq10}
\nonumber \widetilde{A}(\widetilde{\operatorname{grad}}^M \widetilde{f})
&=& -\widetilde{\nabla}_{\widetilde{\operatorname{grad}}^M \widetilde{f}}^{\mathbb{R}^{m+1}} \widetilde{\eta} \\
\nonumber &=&-\widetilde{\nabla}_{\widetilde{\operatorname{grad}}^M \widetilde{f}}^{\mathbb{R}^{m+1}} e^{-\gamma} \eta \\
\nonumber &=& e^{-\gamma}(\widetilde{\operatorname{grad}}^M \widetilde{f}) (\gamma)  \eta - e^{-\gamma}\widetilde{\nabla}_{\widetilde{\operatorname{grad}}^M \widetilde{f}}^{\mathbb{R}^{m+1}}\eta \\
\nonumber &=& -e^{-3\gamma} \operatorname{grad}^M (\eta(\gamma)e^{-\gamma} ) (\gamma) \eta + e^{-3\gamma} \widetilde{\nabla}_{\operatorname{grad}^M (\eta(\gamma)e^{-\gamma})}^{\mathbb{R}^{m+1}}\eta\\
 \nonumber &=& -e^{-3\gamma}  \operatorname{grad}^M(\eta(\gamma)e^{-\gamma})(\gamma) \eta  + e^{-3\gamma} \eta(\gamma) \operatorname{grad}^M(\eta(\gamma)e^{-\gamma})\\
\nonumber && +e^{-3\gamma}  \operatorname{grad}^M(\eta(\gamma)e^{-\gamma})(\gamma) \eta + e^{-3\gamma}\nabla_{\operatorname{grad}^M(\eta(\gamma)e^{-\gamma})}^{\mathbb{R}^{m+1}} \eta \\
  &=&  e^{-3\gamma} \eta(\gamma) \operatorname{grad}^M(\eta(\gamma)e^{-\gamma}) - e^{-3\gamma } A(\operatorname{grad}^M \eta(\gamma) e^{-\gamma}).\quad\qquad
\end{eqnarray}
Substituting $(\ref{eq5})-(\ref{eq10})$ in $(\ref{sys5})$, and by simplifying the resulting equation we obtain the system $(\ref{sys4})$.
\end{proof}

\begin{remark}\label{remark}\quad
\begin{enumerate}
  \item Using Theorem \ref{th3}, we can construct many examples for proper $p$-biharmonic hypersurfaces in the conformally flat space.
  \item If the functions $\gamma$ and $\eta(\gamma)$ are non-zero constants on $M$, then according to Theorem \ref{th3}, the hypersurface $(M^m , \widetilde{g})$ is $p$-biharmonic in $(\mathbb{R}^{m+1} , \widetilde{h}) $ if and only if
 $$|A|^2=m (1-p) \eta(\gamma)^2-m\eta(\eta(\gamma)).$$
\end{enumerate}
\end{remark}

\begin{example}
The hyperplane $\mathbf{i}  :\mathbb{R}^m \hookrightarrow (\mathbb{R}^{m+1} ,  e^{2\gamma(z)} h ) $, $x \longmapsto(x , c) $, where $ \gamma \in C^{\infty}(\mathbb{R}) $,  $h= \sum_{i=1}^{m}dx_i^2 + dz^2 $, and $c\in \mathbb{R}$,   is proper $p$-biharmonic if and only if
$(1-p)\gamma'(c)^2-\gamma''(c)=0$. Note that, the smooth function
$$\gamma(z)=\frac{\ln\left(c_1(p-1)z+c_2(p-1)\right)}{p-1},\quad c_1,c_2\in\mathbb{R},$$
is a solution of the previous differential equation (for all $c$).
\end{example}

\begin{example}
Let $M$ be a surface of revolution in $\{(x,y,z)\in\mathbb{R}^3\,|\,z>0\}$.
If $M$ is part of a plane orthogonal to the axis of revolution, so that $M$ is parametrized by $$(x_1,x_2)\longmapsto(f(x_2)\cos(x_1),f(x_2)\sin(x_1),c),$$ for some constant $c>0$. Here $f(x_2)>0$. Then, $M$ is minimal, and according to Theorem \ref{th3}, the surface $M$ is proper $p$-biharmonic in
$3$-dimensional hyperbolic space $(\mathbb{H}^3,z^{\frac{2}{p-1}}h)$, where $h=dx^2+dy^2+dz^2$.
\end{example}

\textbf{Open Problems.}
\begin{enumerate}
  \item If $M$ is a minimal surface of revolution contained in a catenoid, that is $M$ is parametrized by $$(x_1,x_2)\longmapsto\left(a\cosh\left(\frac{x_2}{a}+b\right)\cos(x_1),a\cosh\left(\frac{x_2}{a}+b\right)\sin(x_1),x_2\right),$$ where $a\neq0$ and $b$ are constants.
Is there $p\geq2$ and $\gamma\in C^\infty(\mathbb{R}^3)$ such that $M$ is proper $p$-biharmonic in $\left(\mathbb{R}^3,e^{2\gamma}(dx^2+dy^2+dz^2)\right)$?
  \item Is there a proper $p$-biharmonic submanifolds in Euclidean space $(\mathbb{R}^n,dx_1^2+...+dx_n^2)$?
\end{enumerate}

\footnotesize{

}


\begin{thebibliography}{99}
\bibitem{BG} P. Baird,  S. Gudmundsson,  {\it $p$-Harmonic maps and minimal submanifolds}, Math. Ann. {\bf294} (1992), 611-624.

\bibitem{BW} P. Baird, J. C. Wood, {\it Harmonic morphisms between Riemannain manifolds},
Clarendon Press Oxford 2003.

\bibitem{BI} B. Bojarski,  and T. Iwaniec,  {\it $p$-Harmonic equation and quasiregular mappings},
Partial differential equations (Warsaw, 1984), 25-38, Banach Center Publ., vol. {\bf19}.
PWN, Warsaw, 1987.

\bibitem{c} B-Y. Chen, {\it Total Mean Curvature and Submanifolds of Finite Type}, Series
in Pure Mathematics, 1. World Scientific Publishing Co., Singapore, 1984.

\bibitem{DC} M. Djaa and A. M. Cherif, {\it On Generalized  $f$-biharmonic Maps and Stress $f$-bienergy Tensor}. Journal of Geometry and Symmetry in Physics JGSP 29 (2013), pp. 65-81.

\bibitem{ES} J. Eells and J. H. Sampson, {\it Harmonic mappings of Riemannian manifolds}, Amer. J. Math. {\bf 86} (1964), 109-160.

\bibitem{ali} A. Fardoun, {\it On equivariant $p$-harmonic maps}, Ann.Inst. Henri. Poincare, {\bf15} (1998), 25-72.

\bibitem{hz} Y. Han and W. Zhang, {\it Some results of $p$-biharmonic maps into a
non-positively curved manifold}, J. Korean Math. Soc. {\bf 52} (2015), No. 5, pp. 1097-1108

\bibitem{Jiang} G. Y. Jiang, {\it 2-Harmonic maps between Riemannian manifolds}, Annals of
Math., China, {\bf 7A(4)} (1986), 389-402.

\bibitem{LMO} E. Loubeau, S. Montaldo, And C. Oniciuc, the stress-energy tensor for biharmonic
maps, arXiv:math/0602021v1 [math.DG] 1 Feb 2006.

\bibitem{cherif2}  A. Mohammed Cherif, {\it On the $p$-harmonic and $p$-biharmonic maps}, J. Geom. (2018) 109:41

\bibitem{o}C. Oniciuc, {\it Biharmonic maps between Riemannian manifolds}, An. Stiint.
Univ. Al.I. Cuza Iasi Mat (N.S.) {\bf 48} (2002), 237-248.

\bibitem{ON} O'Neil, {\it Semi-Riemannian Geometry}, Academic Press, New York, 1983.

\bibitem{OU} Ye-Lin Ou, {\it Biharmonic hypersurfaces in Riemannian
manifolds}, Pacific Journal of Mathematics, Vol. 248, No. 1, 2010.

\bibitem{YX} Y. Xin, {\it Geometry of harmonic maps}, Fudan University, 1996.


\end{thebibliography}
\end{document}